\DeclareMathAlphabet{\mathcal}{U}{eus}{m}{n} 
\renewcommand{\texttt}[1]{{\small\ttfamily #1}}
\definecolor{Linkz}{RGB}{30, 110, 170}
\definecolor{Darkenta}{RGB}{185, 35, 90}
\definecolor{Lightenta}{RGB}{254, 232, 255}
\definecolor{Reference}{RGB}{35, 180, 90}
\definecolor{Periwinkle}{RGB}{102, 51, 255}
\definecolor{Greeno}{RGB}{0, 140, 100}
\definecolor{Leeno}{RGB}{239, 255, 232}
\newtheoremstyle{upright}
{6pt plus 2pt minus 2pt} 
{6pt plus 2pt minus 2pt} 
{\normalfont} 
{} 
{\bfseries} 
{.} 
{.5em} 
{} 
\theoremstyle{upright}
\theoremstyle{upright}
\newtheorem{theorem}{Theorem}[section]
\newtheorem{definition}[theorem]{Definition}
\newtheorem{proposition}[theorem]{Proposition}
\newtheorem{lemma}[theorem]{Lemma}
\newtheorem{corollary}[theorem]{Corollary}
\newtheorem{analogy}[theorem]{Analogy}
\newtheorem*{exposition}{\normalfont\textsl{Exposition}}
\newtheorem*{remark}{\normalfont\textsl{Remark}}
\newtheorem*{conclusion}{\normalfont\textsl{Conclusion}}
\newtheorem*{observation}{\normalfont\textsl{Observation}}
\newtheorem*{roadmap}{\normalfont\textsl{Roadmap}}
\renewenvironment{proof}[1][Proof]{%
	\par\pushQED{\qed}%
	\normalfont
	\topsep6\p@\@plus6\p@\relax
	\trivlist
	\item[\hskip\labelsep\slshape #1\@addpunct{.}]%
}{%
	\popQED\endtrivlist\@endpefalse
}
\newtcolorbox{breakbox}[2][]{%
	breakable,
	title={#2},
	fonttitle=\bfseries,
	colback=white,
	colframe=black!60,
	coltitle=black,
	colbacktitle=white,
	boxrule=0.4pt,
	arc=0pt,
	boxsep=7pt,
	left=3pt,
	right=2pt,
	top=2pt,
	bottom=2pt,
	fontupper=\small\sffamily, 
	#1
}
\newcommand{\customsectionstyle}[2]{%
	\titleformat{\section}[block]
	{\normalfont\fontsize{#1}{1.2\dimexpr#1\relax}\selectfont\centering}
	{\thesection}{1em}%
	{%
		\ifthenelse{\equal{#2}{true}}{\MakeUppercase}{\relax}%
	}%
}
\newcommand{\customsectionspacing}[3]{%
	\titlespacing*{\section}{#1}{#2}{#3}%
}
\newcommand{\customsubsectionstyle}[2]{%
	\titleformat{\subsection}[block]
	{\normalfont\fontsize{#1}{1.2\dimexpr#1\relax}\selectfont\centering}
	{\thesubsection}{1em}%
	{%
		\ifthenelse{\equal{#2}{true}}{\MakeUppercase}{\relax}%
	}%
}
\newcommand{\customsubsectionspacing}[3]{%
	\titlespacing*{\subsection}{#1}{#2}{#3}%
}
\newcommand{\shorttitle}[1]{\def\@shorttitle{#1}}
\newcommand{\email}[1]{\def\@email{#1}}
\newcommand{\metadata}[1]{\def\@metadata{#1}}
\renewcommand{\maketitle}{%
	\begin{center}
		\vfill
		{\fontsize{22pt}{21pt}\selectfont \@title \par}
		\vspace{1em}
		{\normalsize \@author \par}
		\vspace{0.1em}
		{\normalsize \@date \par}
	\end{center}
}
\title{\uppercase{Adversarial Barrier in Uniform Class Separation}}
\author{Milan Rosko}
\date{December 2025}
\newcommand{\HA}{\mathsf{HA}}
\newcommand{\Real}{\Vdash_R}
\newcommand{\Solv}{\operatorname{Solv}}
\newcommand{\Sep}{\mathsf{Sep}}
\newcommand{\NPT}{\mathcal{NP}}
\newcommand{\PT}{\mathcal{P}}
\newcommand{\Live}{\mathsf{Live}}
\newcommand{\Cl}{\mathsf{Cl}}
\newcommand{\Diag}{\Delta_{\theta}}
\newcommand{\Prov}{\operatorname{Prov}}
\newcommand{\Sub}{\mathrm{Sub}}\newcommand{\Tr}{\mathsf{Tr}}
\begin{document}
\maketitle

\begin{center}\scriptsize{
		ORCID: \href{https://orcid.org/0009-0003-1363-7158}{\scriptsize\textsf{0009-0003-1363-7158}}\\
}
\end{center}
	\begin{abstract}
	\vspace{-2ex}
	\footnotesize{
	We identify a strong structural obstruction to \textsc{Uniform Separation} in constructive arithmetic. The mechanism is independent of semantic content; it emerges whenever two distinct evaluator predicates are sustained in parallel and inference remains uniformly representable in an extension of $\HA$. Under these conditions, any putative \textsc{Uniform Class Separation} principle becomes a distinguished instance of a fixed-point construction. The resulting limitation is stricter in scope than classical separation barriers (Baker; Rudich; Aaronson et al.) insofar as it constrains the logical form of uniform separation within $\HA$, rather than limiting particular relativizing, naturalizing, or algebrizing techniques.
}
	\end{abstract}

\begin{center}\scriptsize{
		\textbf{Subject:} Proof Theory, Predicate Logic, Complexity Theory}
\end{center}

\section{Introduction}

\begin{figure}[H]
	\centering
	\includegraphics[width=0.90\textwidth]{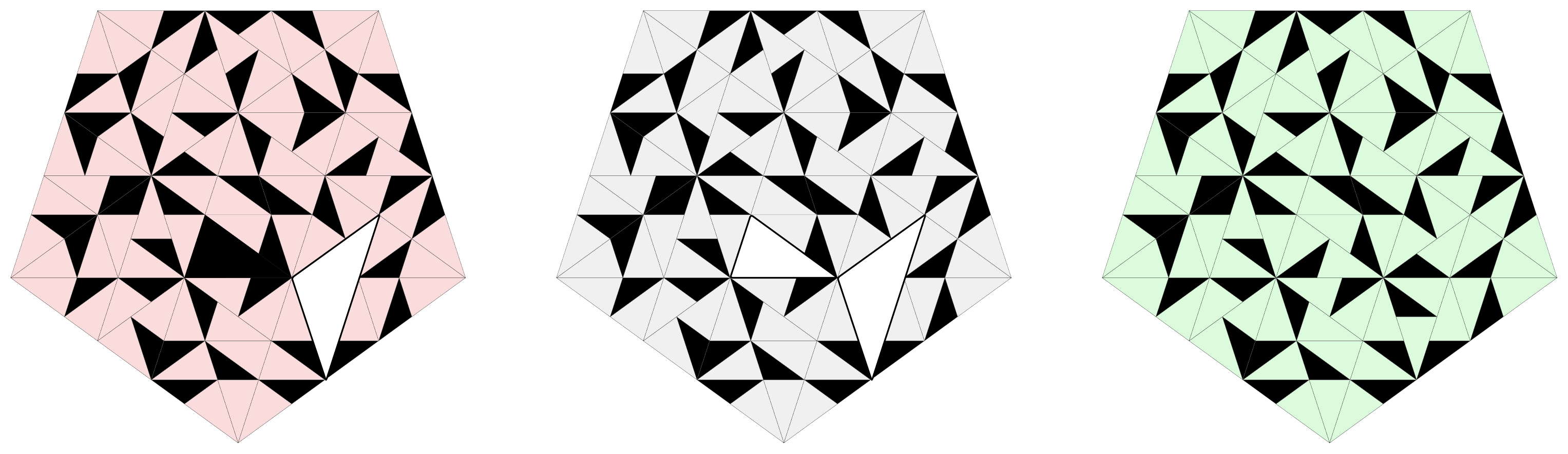}
\caption{
	Three patches of \textsc{Penrose Tilings} sketch how \emph{solvability}, \emph{provability}, and \emph{enumerability} can be arranged to encode adversarial behavior.
	\emph{Left:} a recursively enumerable (RE) patch whose structure was altered (schematic), concealing a pathological payload.
	\emph{Center:} a patch where vacancies are uniquely forced, modelling cases where \emph{solvability} and \emph{provability} coincide. \emph{Right:} a completed patch. Even when local constraints are RE and locally sound, global completion may depend on uniform principles not available predicatively, so the innocent question \enquote{Can this patch be completed?} can conceal key dependencies.} \label{fig:penrose}
\end{figure}

\begin{exposition}
	This paper isolates a uniformity obstruction internal to predicative arithmetic. Instance by instance, \emph{solvability} and \emph{provability} coincide for $\Sigma^0_1$-formulas: if a witness exists, it can be verified, and if $\HA$ proves a $\Sigma^0_1$-statement then it is true in the standard model, cf. \citet{kleene52,troelstra88}. The central phenomenon is that this agreement does not lift to a \emph{uniform} reduction. Predicativity blocks any single arithmetically representable mechanism which, given an index $e$, simultaneously synchronizes (i) computational evidence for $P(e)$ and (ii) internal certification of that evidence by $\HA$.

	We formalize the two notions as follows. Fix a standard arithmetization of \textsc{Kleene realizability} in $\HA$ via a primitive recursive predicate $\Real(s,\varphi)$ \citep{kleene52}.  \emph{For any} arithmetical $P(e)$, define
	\begin{equation}\label{eq:intro-solv}
		\Solv(P(e)) \;\equiv\; \exists s\,\Real(s,P(e)),
	\end{equation}
	and let $\Prov_\HA(P(e))$ abbreviate $\HA$-provability of the Gödel code of $P(e)$.	Since $\HA$ is $\Sigma^0_1$-sound and $\Real(s,\psi)$ is $\Sigma^0_1$ for atomic $\psi$, both $\Solv(P(e))$ and $\Prov_\HA(P(e))$ imply truth of $P(e)$ in~$\mathbb{N}$ when $P$ is $\Sigma^0_1$.	The obstruction therefore does not concern extensional correctness at particular indices; it concerns the possibility of a \emph{uniform, arithmetically representable interface} that both chooses and internally certifies such correctness across all indices.

	The setting is a uniform separation task.	Given two atomic predicates $A(e)$ and $B(e)$, consider
	\begin{equation}\label{eq:intro-sep}
		\Sep(A,B)\;\equiv\;\forall e\,\bigl(A(e)\to \neg B(e)\bigr).
	\end{equation}
	A realizer for $\Sep(A,B)$ is not merely a proof object: it is a functional that, uniformly in $e$, converts any prospective realizers of $A(e)$ and $B(e)$ into a contradiction.	In effect, it supplies a \emph{uniform refuter}—a single arithmetically representable transformation governing all indices where evidence could arise.	From such a refuter one can extract a \emph{reasoning interface} whose outputs are constrained to be internally sound: on inputs where it commits to $A$ (resp.\ $B$), $\HA$ must be able to certify $A(e)$ (resp.\ $B(e)$).	This is the critical promotion: instancewise solvability is forced to behave like uniform internal provability.

	The barrier mechanism is diagonal.	Because $\HA$ represents all primitive recursive operators and proves the \textsc{Diagonal Lemma} \citep{boolos07}, any classifier-interface that is uniformly definable in $\HA$ can be made the parameter of a self-referential instance.	The diagonal index $d=\Diag(\Cl)$ is constructed \emph{intensionally} from the code of the interface itself so as to invert its predicted output.	Concretely, one arranges implications of the form
	\begin{equation}
		\Cl(d)=A \Rightarrow B(d),\quad \Cl(d)=B \Rightarrow A(d),
	\end{equation}
	so that the interface’s own soundness obligations become mutually incompatible at $d$.	The contradiction is therefore interface-driven, similar to Figure \ref{fig:penrose}: it does not depend on semantic features of $A$ or $B$, but on the attempt to collapse evidence and derivability into a single uniform predicative interface.

	A second goal of the paper is auditability.	Uniform diagonal arguments often conceal an implicit reflection step.	We therefore separate two regimes of internal soundness: a \emph{provability-sound} regime, where $\Cl_A(e)\to \Prov_\HA(A(e))$ and $\Cl_B(e)\to \Prov_\HA(B(e))$ hold, and a stronger \emph{truth-sound} regime, where $\Cl_A(e)\to A(e)$ and $\Cl_B(e)\to B(e)$ are available.	The formal contradiction requires an explicit local instance of reflection at the diagonal fixed point.	Making this trigger explicit is the paper’s methodological hinge: it isolates which principles are unavailable predicatively.
\end{exposition}

	We do \emph{not} claim that $\mathcal{P}=\mathcal{NP}$ is undecidable, false, or otherwise resolved in any ambient meta-theory.
	Rather, we show that when one demands a \emph{uniform, arithmetically representable} classifier-interface whose outputs are
	\emph{internally certified} in $\HA$, diagonalization forces a \textsc{Reflection Principle} that predicative arithmetic cannot supply uniformly.

\begin{roadmap}
	Section~\ref{sec:framework} fixes the realizability framework and the solvability–provability interface.
	Section~\ref{sec:uniform} extracts, from any putative solver for $\Sep(A,B)$, a total uniform refuter and the induced classifier-interface together with its provability-soundness guarantees.
	Section~\ref{sec:barrier} constructs the diagonal index depending on the interface and isolates the explicit reflection trigger required to derive contradiction, yielding the barrier theorem and its corollaries.
\end{roadmap}

\section{The Solvability--Provability Framework}\label{sec:framework}

\begin{exposition}
	We work throughout in \emph{Heyting Arithmetic} ($\HA$) in the standard first–order language of arithmetic, equipped with a fixed arithmetization of syntax.  In particular, we assume Gödel codings of terms, formulas, and $\HA$-proofs, together with a provability predicate $\Prov_{\HA}(x)$ satisfying the Hilbert–Bernays–L\"ob derivability conditions.

	The central technical interface between computation and proof is provided by a realizability predicate.  Fix a primitive recursive relation
	\begin{equation}
		\Real(s,\varphi),
	\end{equation}
	formalizing standard \textsc{Kleene realizability} inside $\HA$ \citep{kleene52}.  Intuitively, $\Real(s,\varphi)$ asserts that the (partial) recursive operator coded by $s$ realizes the formula $\varphi$.  We recall only the clauses relevant for uniformity and diagonalization; all are primitive recursively definable and hence representable in $\HA$:
	\begin{enumerate}[label=\textnormal{(\alph*)}]
		\item If $\psi$ is atomic, then $\Real(s,\psi)$ is a $\Sigma^0_1$-formula.
		\item $\Real(s,\varphi\land\psi)$, $\Real(s,\varphi\lor\psi)$, and $\Real(s,\exists x\,\varphi(x))$ are defined via primitive recursive projections in the standard way.
		\item $\Real(s,\varphi\to\psi)$ abbreviates
		\begin{equation}
			\forall t\bigl(\Real(t,\varphi)\;\to\;\Real(s\!\ast\!t,\psi)\bigr),
		\end{equation}
		where $\ast$ is a fixed primitive recursive application operator.
		\item $\Real(s,\forall x\,\varphi(x))$ abbreviates
		\begin{equation}
			\forall e\,\Real\bigl(s(e),\varphi(e)\bigr),
		\end{equation}
		with $s(e)$ denoting the $e$th value of the partial recursive operator coded by $s$.
	\end{enumerate}
	The key point is that realizability itself is arithmetically tame: all clauses are primitive recursive, and atomic realizability is $\Sigma^0_1$.
\end{exposition}

\begin{definition}\label{def:SolvProv}
	For any arithmetical formula $P(e)$, define:
	\begin{equation}
		\Solv(P(e)) \;\equiv\; \exists s\,\Real(s,P(e)),
	\end{equation}
	\begin{equation}
		\Prov_{\HA}(P(e))
		\;\;\overset{\mathrm{def}}{\Longleftrightarrow}\;\;
		\Pr_{\HA}\bigl(\ulcorner P(e)\urcorner\bigr),
		\label{eq:ProvDef}
	\end{equation}
	We call $\Solv(P(e))$ the \emph{solvability} of $P(e)$, and we call $\Prov_{\HA}(P(e))$ the \emph{provability} of $P(e)$ in $\HA$.
\end{definition}

\begin{remark}\label{rem:InstancewiseAgreement}
	If $P(e)$ is $\Sigma^0_1$, then solvability and provability agree \emph{extensionally}.  Indeed, since $\Real(s,P(e))$ is $\Sigma^0_1$ whenever $P$ is atomic, and since $\HA$ is $\Sigma^0_1$-sound, we have
	\begin{equation}
		\vdash_\HA \Real(s,P(e)) \;\Rightarrow\; \Tr_{\mathbb N}(P(e)),
	\end{equation}
	\begin{equation}
		\vdash_\HA \Prov_{\HA}(P(e)) \;\Rightarrow\; \Tr_{\mathbb N}(P(e)).
	\end{equation}
	Thus, on each individual instance, both notions validate the same true $\Sigma^0_1$-facts.  The gap exploited later is not instancewise but \emph{uniform}: $\HA$ cannot predicatively verify a single transformation that converts solvability evidence into provability evidence across all indices.
\end{remark}

\begin{lemma}[Internal substitution]\label{lem:Substitution}
	There exists a primitive recursive function $\Sub(f,e)$ such that, for every code $f$ of a formula with exactly one free variable and every numeral $e$,
	\begin{equation}
		\vdash_\HA \Sub(f,e) \;=\; \ulcorner f(\overline e)\urcorner.
	\end{equation}
	In particular, $\HA$ can internally form the Gödel codes of syntactic instances required for diagonalization.
\end{lemma}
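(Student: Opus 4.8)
The plan is to construct $\Sub$ by an explicit primitive recursion on the Gödel coding of syntax and then to reduce provable correctness to $\HA$'s ability to verify numerical computations on closed terms. First I would fix the arithmetization already assumed in Section~\ref{sec:framework}: the standard sequence-coding of terms and formulas, together with primitive recursive predicates recognizing codes of terms and formulas and primitive recursive operations recovering immediate subterms and subformulas, the principal symbol, and the list of free variables. Define the numeral-code function by $\Num(0)=\code{0}$ and $\Num(e+1)=\mathrm{ap}(\code{S},\Num(e))$, where $\mathrm{ap}$ primitive-recursively builds the code of $S\,t$ from the code of $t$; an immediate induction on $e$ gives $\Num(e)=\code{\overline e}$.

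Next I would define substitution by structural recursion on codes. On term codes, $\mathrm{SubT}(t,v,s)$ replaces every occurrence of the variable with code $v$ by the term with code $s$: a primitive recursion on the length of the sequence coded by $t$, with constant and variable base cases and a recursive clause distributing $\mathrm{SubT}$ through each function symbol. On formula codes, $\mathrm{SubF}(f,v,s)$ distributes through the connectives and, at a quantifier $Qx\,\varphi$, recurses into $\varphi$ unless $x$ has code $v$. Since the term being substituted is always a numeral — a closed term — no variable-capture side condition is needed, which keeps the clauses clean. Fixing the convention that the unique free variable of the formulas of interest is a designated $v_\ast$ (or, more generally, reading it off via the primitive recursive free-variable list and requiring uniqueness), set
\[
\Sub(f,e)\;:=\;\mathrm{SubF}\bigl(f,\,v_\ast,\,\Num(e)\bigr).
\]
Every ingredient is primitive recursive, hence so is $\Sub$, and $\HA$ represents it — either by adjoining it as a defined function symbol with its recursion equations (a conservative extension), or via its $\Delta^0_1$ graph — so that for each concrete computation $\HA$ proves the corresponding value equation.

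Finally I would check provable correctness. Given a concrete code $f$ of a formula with exactly one free variable and a concrete numeral $e$, both $\Sub(f,e)$ and $\code{f(\overline e)}$ denote specific natural numbers; by the definitions of $\mathrm{SubF}$ and $\Num$ — substituting $\Num(e)=\code{\overline e}$ into $f$ produces precisely the code of $f(\overline e)$ — these numbers coincide, and since $\HA$ decides every atomic sentence about closed numerical terms it proves $\Sub(f,e)=\code{f(\overline e)}$, which is the claim. The one genuinely delicate point is the bookkeeping of the middle stage: arranging the recursion measure and, above all, the quantifier clause so that $\mathrm{SubF}(\code{\varphi},v_\ast,\code{\overline e})=\code{\varphi[\overline e/v_\ast]}$ holds exactly and consistently with whatever coding was fixed. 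If the fully internal, quantified form $\vdash_\HA\forall f\,\forall e\,\bigl(\text{``}f\text{ a one-variable formula''}\to\Sub(f,e)=\code{f(\overline e)}\bigr)$ were also wanted, one would prove the $\mathrm{SubF}$ recursion equations inside $\HA$ by course-of-values induction on $f$, which $\HA$ supports; the downstream Diagonal Lemma needs only the schematic statement above, so I would state and use that.
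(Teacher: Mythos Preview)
Your construction is correct and is the standard route: build $\Num$, then term- and formula-level substitution by primitive recursion on the coding, observe that substituting a closed numeral avoids capture, and conclude provable correctness either instance-by-instance via $\Sigma^0_1$-completeness of $\HA$ on closed numerical equations or uniformly by course-of-values induction inside $\HA$. There is nothing to compare against, however: the paper states Lemma~\ref{lem:Substitution} without proof, treating it as a well-known arithmetization fact (in the tradition of G\"odel, Hilbert--Bernays, and the references to \citet{boolos07}). Your proposal therefore supplies exactly the details the paper elides, and does so in the expected way; the only point on which you are more careful than strictly necessary is the discussion of the fully quantified internal form, which, as you note, the downstream Diagonal Lemma does not require.
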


\begin{exposition}
	The framework just fixed provides three structural facts that will be used repeatedly:
	\begin{enumerate}[label=\textnormal{(\roman*)}]
		\item All realizability statements relevant to atomic predicates $A(e)$ and $B(e)$ are $\Sigma^0_1$ and therefore stable under the soundness of $\HA$.
		\item Any realizer for a universal statement yields a total, primitive recursively representable functional, whose totality can be verified inside $\HA$.
		\item By Lemma~\ref{lem:Substitution} together with the \textsc{Diagonal Lemma}, $\HA$ can construct fixed points whose syntactic content depends intensionally on any arithmetically definable operator, including classifiers extracted from realizers.
	\end{enumerate}
	These ingredients isolate the exact interface at which uniformity becomes meaningful in $\HA$, and they suffice for the extraction and diagonal arguments carried out in the next two sections.
\end{exposition}

\section{Uniform Refutation}\label{sec:uniform}

\begin{exposition}
	This section extracts the uniform computational content of a putative solver for class separation. The key observation is that a realizer for $\Sep(A,B)$ does not merely witness the truth of a universal implication; it enforces a \enquote{uniform refutation mechanism} acting on all indices. Because realizability is arithmetized by primitive recursive clauses, this mechanism is internally accessible to $\HA$ and can be analyzed syntactically. The result is a total refuter from which a classifier-interface is forced.
\end{exposition}

\begin{lemma}\label{lem:uniform-refuter}
	Assume $\Solv(\Sep(A,B))$.  Then there exists a code $r$ such that
	\begin{equation}
		\Real(r,\Sep(A,B)).
	\end{equation}
	Moreover, $\HA$ proves that for all indices $e$ and all $s,t$,
	\begin{equation}\label{eq:refuter-action}
		\Real\bigl(s,A(e)\bigr)\ \wedge\ \Real\bigl(t,B(e)\bigr)
		\;\longrightarrow\;
		\Real\bigl(r(e)(s)(t),\bot\bigr).
	\end{equation}
	In particular, for each $e$, the operator $r(e)$ uniformly transforms any pair of realizers for $A(e)$ and $B(e)$ into a contradiction.
\end{lemma}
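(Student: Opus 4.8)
The plan is to treat the lemma as a pure unfolding of the realizability clauses~(a)--(d) for the specific formula $\Sep(A,B)$, tracking at each step that the unfolding is not merely semantically valid but is a theorem of $\HA$, since the clauses are definitional abbreviations built from primitive recursive functions that $\HA$ represents together with their defining equations (Lemma~\ref{lem:Substitution}). The existence of $r$ carries no content: by Definition~\ref{def:SolvProv} the hypothesis $\Solv(\Sep(A,B))$ \emph{is} the sentence $\exists s\,\Real(s,\Sep(A,B))$, so I would simply instantiate the existential and fix a witness $r$.

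Next I would unfold. Write $\Sep(A,B)\equiv\forall e\,\chi(e)$ with $\chi(e)\equiv A(e)\to\neg B(e)$ and $\neg B(e)\equiv B(e)\to\bot$. Clause~(d) gives $\vdash_\HA\Real(r,\Sep(A,B))\leftrightarrow\forall e\,\Real(r(e),\chi(e))$; here the functional $e\mapsto r(e)$ must be total, which is exactly the content recorded in the exposition above (item~(ii)) and is $\HA$-verifiable, and in any case is forced by the realizability statement itself, since clause~(d) evaluates to a falsity as soon as $r(e)$ is undefined at some $e$. Clause~(c) applied to $\chi(e)$ gives $\vdash_\HA\Real(r(e),\chi(e))\leftrightarrow\forall s\,\bigl(\Real(s,A(e))\to\Real(r(e)\ast s,\neg B(e))\bigr)$, and clause~(c) applied once more to $B(e)\to\bot$ gives $\vdash_\HA\Real(r(e)\ast s,\neg B(e))\leftrightarrow\forall t\,\bigl(\Real(t,B(e))\to\Real((r(e)\ast s)\ast t,\bot)\bigr)$. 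Composing these $\HA$-provable equivalences, uncurrying the nested implications into a conjunctive antecedent, and abbreviating $(r(e)\ast s)\ast t$ by $r(e)(s)(t)$, I would conclude
\[
	\vdash_\HA\ \Real(r,\Sep(A,B))\ \longrightarrow\ \forall e\,\forall s\,\forall t\,\Bigl(\Real(s,A(e))\wedge\Real(t,B(e))\ \longrightarrow\ \Real(r(e)(s)(t),\bot)\Bigr).
\]
Since the chosen $r$ satisfies the antecedent, \eqref{eq:refuter-action} follows; equivalently, $\HA$ together with the true hypothesis $\Real(\overline r,\Sep(A,B))$ proves the refuter action, which is the form used downstream.

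The step I expect to be the real (if modest) obstacle is discipline rather than depth: one must resist silently promoting ``$r$ realizes $\Sep(A,B)$'' to an outright $\HA$-theorem. Unfolded --- using the standard clause that no code realizes $\bot$, so $\Real(\cdot,\bot)$ is $\HA$-refutable --- that statement becomes the $\Pi^0_1$-sentence $\forall e\,\forall s\,\forall t\,\neg\bigl(\Real(s,A(e))\wedge\Real(t,B(e))\bigr)$, which $\HA$ generally cannot prove. What $\HA$ proves is the structural \emph{implication} displayed above: any realizer of $\Sep(A,B)$, whatever its provenance, acts uniformly as a refuter. Keeping this implication explicit, rather than absorbing its antecedent into the ambient theory, is what makes the later reflection trigger auditable, in line with the methodological aim stated in the introduction.
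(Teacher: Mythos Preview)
Your proposal is correct and follows exactly the paper's approach: instantiate the existential in $\Solv(\Sep(A,B))$ to obtain $r$, then unfold the realizability clauses for $\forall$ and $\to$ (the latter twice, once for $A(e)\to\neg B(e)$ and once for $B(e)\to\bot$) to reach~\eqref{eq:refuter-action}. Your final paragraph, distinguishing the $\HA$-provable \emph{implication} from the generally unprovable $\Pi^0_1$ antecedent $\Real(\overline r,\Sep(A,B))$, is a sharpening the paper's own proof leaves implicit but which is entirely in the spirit of the paper's later insistence on auditing reflection steps.
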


\begin{proof}
	By definition of $\Solv(\Sep(A,B))$, there exists $r$ such that
	\begin{equation}
	\Real(r,\forall e\,(A(e)\to\neg B(e))).
	\end{equation}
	Unfolding the realizability clauses for $\forall$ and $\to$ yields \eqref{eq:refuter-action}, with $r(e)$ acting as a functional refuter at index $e$. All reasoning takes place within $\HA$, since realizability and application are primitive recursive.
\end{proof}

\begin{remark}\label{rem:uniformity}
	The content of Lemma~\ref{lem:uniform-refuter} is not that $A(e)$ and $B(e)$ are disjoint on particular indices, but that a \emph{single} arithmetically representable operator $r$
	governs all indices uniformly.
	Diagonalization will target the code of this operator itself.
\end{remark}

\begin{definition}\label{def:classifier}
	Define the \enquote{uniform classifier-interface} $\Cl(e)$ by
	\begin{equation}
		\Cl(e)\;=\;
		\begin{cases}
			A & \text{if }\Solv(A(e)),\\
			B & \text{if }\Solv(B(e)),\\
			\bot & \text{otherwise}.
		\end{cases}
	\end{equation}
	Equivalently, write
	\begin{equation}
		\Cl_A(e)\;\equiv\;\exists s\,\Real(s,A(e)),
		\qquad
		\Cl_B(e)\;\equiv\;\exists t\,\Real(t,B(e)).
	\end{equation}
\end{definition}

\begin{remark}
	The object $\Cl$ is not introduced as a semantic decision procedure. It is an arithmetized surrogate for a \emph{uniform reasoning interface} extracted from a realizer for $\Sep(A,B)$. The barrier targets the possibility of such an internally certified interface, not the extensional partition of instances itself.
\end{remark}

\begin{remark}\label{obs:classifier-totality}
	If $r$ realizes $\Sep(A,B)$, then $\HA$ proves that $\Cl_A(e)$ and $\Cl_B(e)$ are mutually exclusive. Indeed, if both held for some $e$, realizers $s$ and $t$ would exist for $A(e)$ and $B(e)$, contradicting \eqref{eq:refuter-action}. Thus, on all indices where either predicate is solvable, $\Cl$ makes a determinate choice.
\end{remark}

\begin{proposition}[Uniform proof-extraction / provability-upgrade]\label{assm:upgrade}
	Assume, there is an arithmetical predicate $\Live(e)$ (the \emph{promised domain}) such that
	\begin{equation}
		\vdash_\HA \forall e\left(\Live(e) \rightarrow	\left[
		\begin{array}{l}
		\bigl(\Cl_A(e)\rightarrow \Prov_{\HA}(\ulcorner A(\overline e)\urcorner)\bigr){}\wedge{} \\[0.5ex]
		\bigl(\Cl_B(e)\rightarrow \Prov_{\HA}(\ulcorner B(\overline e)\urcorner\bigr)
		\end{array}\right]
		\right).
	\end{equation}
\end{proposition}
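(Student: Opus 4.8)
The plan is to obtain the displayed implication from two standard facts about $\HA$, neither of which requires the refuter $r$ of Lemma~\ref{lem:uniform-refuter}: realizability soundness for atomic formulas, and provable $\Sigma^0_1$-completeness. The standing hypothesis $\Solv(\Sep(A,B))$ enters only through Remark~\ref{obs:classifier-totality}, which guarantees that at most one of $\Cl_A(e)$ and $\Cl_B(e)$ ever fires, so the upgraded verdict is coherent. For the promised domain I would take $\Live(e)$ to be the primitive recursive \enquote{live} predicate $\Cl_A(e)\vee\Cl_B(e)$ (equivalently, one may use $\Wff(e)$, or indeed any total predicate): under the framework's stipulation that $A$ and $B$ are atomic, the implication will in fact hold unconditionally, so the guard $\Live(e)$ serves only to mark the indices at which $\Cl$ commits.

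First I would fix the atomic soundness clause. By construction of the arithmetized Kleene realizability relation, the clause for an atomic $\psi$ makes $\Real(s,\psi)$ provably equivalent to $\psi$ (up to a triviality on $s$), hence
\[
	\vdash_\HA \forall e\,\forall s\,\bigl(\Real(s,A(e))\to A(e)\bigr),
	\qquad
	\vdash_\HA \forall e\,\forall t\,\bigl(\Real(t,B(e))\to B(e)\bigr).
\]
Discharging the existential quantifier in $\Cl_A(e)\equiv\exists s\,\Real(s,A(e))$ by intuitionistic $\exists$-elimination — legitimate since $A(e)$ does not contain $s$ — yields $\vdash_\HA \Cl_A(e)\to A(e)$, and likewise $\vdash_\HA \Cl_B(e)\to B(e)$. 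Second I would invoke provable $\Sigma^0_1$-completeness of $\HA$: since $A,B$ are atomic and hence $\Delta_0$, the sentences $A(\overline e),B(\overline e)$ are $\Sigma^0_1$ and $\HA$ proves the uniform completeness statements
\[
	\vdash_\HA \forall e\,\bigl(A(e)\to \Prov_\HA(\ulcorner A(\overline e)\urcorner)\bigr),
	\qquad
	\vdash_\HA \forall e\,\bigl(B(e)\to \Prov_\HA(\ulcorner B(\overline e)\urcorner)\bigr),
\]
where $\ulcorner A(\overline e)\urcorner$ is formed internally as $\Sub(\ulcorner A\urcorner,e)$ via Lemma~\ref{lem:Substitution}. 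Chaining the two steps inside $\HA$, conjoining, and prefixing the vacuous guard $\Live(e)$ gives exactly the displayed formula.

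The step I expect to require the most care — and the one the paper explicitly wants audited — is the internalization of these two facts: one must verify that the fixed arithmetization of $\Real$ validates the atomic clause $\vdash_\HA \forall s\,(\Real(s,\psi)\leftrightarrow\psi)$ as a single internal $\forall$-theorem rather than merely as an external scheme, and that $\Sigma^0_1$-completeness is deployed in its uniform internal form $\vdash_\HA\forall e\,(\sigma(e)\to\Prov_\HA(\ulcorner\sigma(\overline e)\urcorner))$, not instancewise. These are precisely the two points at which a hidden appeal to a stronger reflection principle could otherwise creep in. Keeping them explicit is what will let Section~\ref{sec:barrier} separate the provability-sound regime established here from the stronger truth-sound regime, and locate the remaining gap in the single local reflection instance $\Prov_\HA(\ulcorner A(\overline d)\urcorner)\to A(\overline d)$ needed at the diagonal fixed point $d$, which this Proposition does not by itself supply.
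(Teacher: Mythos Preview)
You have misread the role of this statement. In the paper it is not a theorem with a proof: the opening word \enquote{Assume} and the immediately following Remark~\ref{rem:upgrade} (\enquote{Proposition~\ref{assm:upgrade} is \emph{not} a consequence of realizability alone}) make explicit that this is a \emph{hypothesis} isolating the extra proof-extraction principle the barrier requires. The paper offers no proof because none is intended; the whole architecture of Section~\ref{sec:barrier} is to exhibit this upgrade, together with the local reflection trigger, as the hinges that cannot be supplied predicatively.

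Your argument would in any case not establish the statement in the generality the paper needs. It rests on two claims: (i) the atomic realizability clause gives $\vdash_\HA \Real(s,A(e))\to A(e)$, and (ii) provable $\Sigma^0_1$-completeness then yields $\vdash_\HA A(e)\to\Prov_\HA(\ulcorner A(\overline e)\urcorner)$. Step (ii) requires $A(e)$ to be a $\Sigma^0_1$ formula \emph{in the base language of $\HA$}. But in the paper's intended instantiations (e.g.\ the predicates coding polynomial-time decidability and verifiability in the Conclusion), $A$ and $B$ are certainly not $\Delta_0$, and if one instead reads $A,B$ as fresh relation symbols added to the language, then $\HA$ has no axioms governing them and cannot prove $A(\overline n)$ for any $n$, so $A(e)\to\Prov_\HA(\ulcorner A(\overline e)\urcorner)$ simply fails. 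Either way the chain breaks at the $\Sigma^0_1$-completeness step. Your derivation is valid only in the degenerate case where $A,B$ are literal $\Delta_0$ equations of arithmetic, in which $\Sep(A,B)$ is a decidable triviality and the barrier has no content; the paper's point is precisely that outside this degenerate case the upgrade is a genuine additional commitment, not a theorem.
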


\begin{remark}\label{rem:upgrade}
	Proposition~\ref{assm:upgrade} is \emph{not} a consequence of realizability alone. It isolates the additional reflection or proof-extraction principle required to promote solvability evidence to internal $\HA$-provability uniformly.
\end{remark}

\begin{exposition}
	The extraction of $\Cl$ exhibits the crucial collapse. From a single realizer for $\Sep(A,B)$ we obtain a uniformly definable classifier-interface whose commitments are required to be internally certified in $\HA$ on the promised domain. This promotes instancewise solvability to a \emph{uniform, provability-level} interface. The diagonal argument of the next section targets precisely this promotion.
\end{exposition}

\begin{remark}
	No semantic information about the predicates $A$ and $B$ has been used. The argument depends only on primitive recursive representability and uniformity of the refuter extracted from $\Solv(\Sep(A,B))$. This abstraction is what allows the barrier theorem to apply across disparate arithmetizations.
\end{remark}

\section{Barrier}\label{sec:barrier}

\begin{proposition}[Promise totality on live instances]\label{assm:promise-totality}
Assume,
\begin{equation}
	\vdash_\HA \forall e\bigl(\Live(e)\rightarrow (\Cl_A(e)\vee \Cl_B(e))\bigr).
\end{equation}
\end{proposition}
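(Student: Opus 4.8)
The plan is to reduce the asserted totality on the promised domain to a purely extensional dichotomy, using that $A$ and $B$ are atomic, and then to observe that the remaining content is a constraint on the choice of $\Live$ rather than a fact about realizability.

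First I would record the instancewise collapse that atomicity forces. Since $\Real(s,A(e))$ is $\Sigma^0_1$ for atomic $A$ and closed atomic formulas are decided by $\HA$, the fixed realizability convention assigns a trivial realizer to every true closed instance of $A$, so $\HA\vdash A(e)\to\Solv(A(e))$; the converse $\HA\vdash\Solv(A(e))\to A(e)$ is the soundness half already in Remark~\ref{rem:InstancewiseAgreement}, since for atomic $A$ the truth predicate $\Tr_{\mathbb N}$ is extensionally $A$ itself. Hence $\HA\vdash\Cl_A(e)\leftrightarrow A(e)$ and, symmetrically, $\HA\vdash\Cl_B(e)\leftrightarrow B(e)$, all internal to $\HA$ because the relevant realizability statements are $\Sigma^0_1$ and evaluation of closed atoms is primitive recursive.

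With that collapse in hand, the target $\HA\vdash\forall e\,(\Live(e)\to(\Cl_A(e)\vee\Cl_B(e)))$ becomes $\HA\vdash\forall e\,(\Live(e)\to(A(e)\vee B(e)))$: the promised domain must lie inside the extensional union of the two classes. For a genuine uniform separation task this inclusion is exactly the \enquote{promise}, and it is secured by the very definition of $\Live$ --- taking $\Live(e)\equiv A(e)\vee B(e)$ makes the implication a tautology of $\HA$, and more generally $\Live$ is to be chosen as a primitive recursive subdomain on which the dichotomy $A\vee B$ has already been verified. So the proposition holds by construction of $\Live$; the real content is not totality in isolation but the admissibility of a $\Live$ that simultaneously underwrites Proposition~\ref{assm:upgrade}.

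I expect that compatibility requirement to be the one genuine obstacle, which is why the statement is quarantined as its own proposition rather than folded into Definition~\ref{def:classifier}. The diagonal step of the barrier theorem will need a single $\Live$ that (i) delivers the totality above, (ii) delivers the provability-upgrade of Proposition~\ref{assm:upgrade}, and (iii) contains the self-referential index $d=\Diag(\Cl)$. Contracting $\Live$ to force (i)--(ii) threatens to expel $d$, while enlarging $\Live$ to capture $d$ threatens (i)--(ii); the barrier is precisely the claim that no predicatively available $\Live$ meets all three at once. Making the totality requirement explicit here is what lets that tension be audited at the diagonal step rather than imported silently.
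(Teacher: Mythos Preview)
The paper does not prove this proposition: it is recorded as a standing \emph{assumption} (note the label prefix \texttt{assm:} and the opening word \enquote{Assume,}), on a par with Proposition~\ref{assm:upgrade} and Proposition~\ref{assm:trigger}, and is then fed as a hypothesis into Theorem~\ref{thm:barrier-cond}. There is no proof in the paper to compare yours against.

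Your reduction is nonetheless informative. Under the standard Kleene clause for atoms (any number realizes a true closed atom), one does have $\HA\vdash\Cl_A(e)\leftrightarrow A(e)$ and $\HA\vdash\Cl_B(e)\leftrightarrow B(e)$, so the displayed condition is equivalent in $\HA$ to $\forall e\,(\Live(e)\to A(e)\vee B(e))$. But this is a \emph{reformulation} of the hypothesis, not a derivation of it: it simply exhibits the proposition as a constraint on the choice of $\Live$, which is exactly what you yourself conclude. Your proposed discharge---take $\Live(e)\equiv A(e)\vee B(e)$---trivializes this one clause but is not the paper's intended instantiation (the remark immediately following the proposition suggests $\Live\equiv\mathrm{Wff}$), and, as you correctly anticipate, it collides with the downstream requirement $\vdash_\HA\Live(\overline d)$ in Corollary~\ref{cor:barrier-truth-fixed}: with that choice one would need $\HA\vdash A(\overline d)\vee B(\overline d)$ outright, which is not in general available.

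So your third paragraph is the correct reading: the proposition is deliberately quarantined as a named hypothesis precisely so that the joint satisfiability of promise totality, provability-upgrade, and $\Live(\overline d)$ can be audited at the diagonal step. There is no missing proof on the paper's side; your first two paragraphs should be understood as an analysis of what the assumption demands of $\Live$, not as a proof that it holds.
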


\begin{remark}
	One may take $\Live(e)\equiv \mathrm{Wff}(e)$ (or another syntactic predicate) when the intended interface is total on all syntactic inputs; otherwise $\Live$ records the promised inputs.
\end{remark}

\begin{remark}\label{rem:live-domain}
	All operators used in the diagonal construction are primitive recursive and hence representable in $\HA$. Accordingly, the diagonal index constructed below is a legitimate input for the classifier-interface on the promised domain.
\end{remark}

\begin{exposition}\label{exp:barrier-overview}
	This section contains the core diagonal argument. Starting from a uniform refuter and a provability-upgrade assumption, we construct a self-referential index whose defining content depends intensionally on the classifier-interface itself.

	A central methodological point is that the argument proceeds only to the level of \emph{conditional provability obligations} inside $\HA$. Any contradiction at the level of truth is shown to require an explicit, additional reflection trigger, which is isolated below.
\end{exposition}

\begin{definition}[Flip formula]\label{def:flip}
	Let
	\begin{equation}
	\theta(x)\;\equiv\;(\Cl_A(x)\rightarrow B(x))\ \wedge\ (\Cl_B(x)\rightarrow A(x)),
	\end{equation}
	and
	\begin{equation}
	t_\theta\;\equiv\;\ulcorner \theta(v)\urcorner.
	\end{equation}
\end{definition}

\begin{lemma}[Fixed point]\label{lem:fixedpoint}
	Let $d\;\equiv\;\mathrm{diag}(t_\theta)$ and let $D$ be the sentence with code $d$.
	Then
	\begin{equation}
	\vdash_\HA D \leftrightarrow \theta(\overline d).
	\end{equation}
\end{lemma}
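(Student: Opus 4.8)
The plan is to recognise this as a direct instance of the Gödel--Carnap diagonal lemma and to carry out the explicit fixed-point construction using the substitution function $\Sub$ from Lemma~\ref{lem:Substitution}. First I would note that $\theta(x)$ of Definition~\ref{def:flip} is, once the abbreviations are unfolded, a bona fide arithmetical formula with exactly one free variable: $\Cl_A(x)$ is $\exists s\,\Real(s,\Sub(\ulcorner A(v)\urcorner,x))$ and likewise for $\Cl_B$, so although $\theta(x)$ internally forms Gödel codes parametrised by $x$, it remains a single-free-variable formula to which the diagonal construction applies verbatim. This is the only place where one must be slightly careful about what $\theta(x)$ literally denotes; everything after it is formal bookkeeping.

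Next I would spell out $\mathrm{diag}$. Let $\beta(v)$ be the formula $\theta(\Sub(v,v))$, with $\Sub$ taken as (a term for) the primitive recursive function of Lemma~\ref{lem:Substitution}, and set $b\equiv\ulcorner\beta(v)\urcorner$. Since $\beta$ has exactly one free variable, Lemma~\ref{lem:Substitution} gives $\vdash_\HA\Sub(\overline b,\overline b)=\ulcorner\beta(\overline b)\urcorner$. Define $\mathrm{diag}$ to be the primitive recursive function sending $t_\theta=\ulcorner\theta(v)\urcorner$ first to $b$ (a primitive recursive code transformation building $\ulcorner\theta(\Sub(v,v))\urcorner$) and then to $\Sub(b,b)$; thus $d=\mathrm{diag}(t_\theta)=\Sub(b,b)=\ulcorner\beta(\overline b)\urcorner$, and $D$, the sentence with code $d$, is literally $\beta(\overline b)\equiv\theta(\Sub(\overline b,\overline b))$.

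Then I would close the equivalence inside $\HA$ in two short steps. The sentence $D$ is syntactically $\theta(\Sub(\overline b,\overline b))$, so $\vdash_\HA D\leftrightarrow\theta(\Sub(\overline b,\overline b))$ trivially. By Lemma~\ref{lem:Substitution}, $\vdash_\HA\Sub(\overline b,\overline b)=\overline d$, and substituting equals for equals inside $\theta$ (intuitionistic equality reasoning) yields $\vdash_\HA\theta(\Sub(\overline b,\overline b))\leftrightarrow\theta(\overline d)$. Chaining the two gives $\vdash_\HA D\leftrightarrow\theta(\overline d)$, as required.

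The step I expect to be the genuine (if modest) obstacle is the intensional matching in the last paragraph: one must ensure that the numeral $\overline d$ naming the computed Gödel code $\Sub(b,b)$ is provably equal in $\HA$ to the closed term $\Sub(\overline b,\overline b)$ produced by the diagonalising substitution --- which is precisely the content engineered into Lemma~\ref{lem:Substitution} --- and that the unfolded $\theta$ really is the formula we diagonalised on. A secondary point worth recording is that the whole derivation uses only intuitionistic propositional logic and the equality axioms, so it is available in $\HA$ with no appeal to excluded middle; this matters because the downstream barrier argument must remain predicative. Finally, the specific shape of the flip formula $\theta$ plays no role whatsoever here --- only that it is an arithmetical formula with one free variable --- which is exactly why the fixed point can be asserted at this level of generality.
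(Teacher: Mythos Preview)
Your proposal is correct and is exactly the standard G\"odel--Carnap diagonal construction the paper has in mind; the paper itself states Lemma~\ref{lem:fixedpoint} without proof, treating it as an immediate instance of the \textsc{Diagonal Lemma} already cited, so you have simply spelled out what the paper leaves implicit. Your care in checking that the unfolded $\theta(x)$ is genuinely a one-free-variable arithmetical formula, and that only intuitionistic equality reasoning is used, is appropriate for the $\HA$ setting and goes slightly beyond what the paper records.
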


\begin{theorem}[Adversarial Barrier]\label{thm:barrier-cond}
	Assume $\Solv(\Sep(A,B))$, Proposition~\ref{assm:upgrade}, and
	Proposition~\ref{assm:promise-totality}.
	Let $d$ be as in Lemma~\ref{lem:fixedpoint}.
	Then $\HA$ proves the conditional obligations
	\begin{equation}
		\bigl(\Cl_A(\overline d)\rightarrow \Prov_{\HA}(\ulcorner A(\overline d)\urcorner)\bigr)
		\ \wedge\
		\bigl(\Cl_B(\overline d)\rightarrow \Prov_{\HA}(\ulcorner B(\overline d)\urcorner)\bigr),
	\end{equation}
	and moreover
	\begin{equation}
		\vdash_\HA
		\left[
		\begin{array}{l}
			\Live(\overline d)\rightarrow\Bigl(\Cl_A(\overline d)\rightarrow\bigl(B(\overline d)\wedge \Prov_{\HA}(\ulcorner A(\overline d)\urcorner)\bigr)\Bigr),\\[0.4em]
			\Live(\overline d)\rightarrow\Bigl(\Cl_B(\overline d)\rightarrow\bigl(A(\overline d)\wedge \Prov_{\HA}(\ulcorner B(\overline d)\urcorner)\bigr)\Bigr)
		\end{array}
		\right].
	\end{equation}
	In particular, any further principle that forces the classifier to commit on $d$
	and upgrades provability to truth triggers incompatibility.
\end{theorem}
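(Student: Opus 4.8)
The plan is to read both displays off the fixed point of Lemma~\ref{lem:fixedpoint} together with the three running hypotheses — $\Solv(\Sep(A,B))$ via the uniform refuter of Lemma~\ref{lem:uniform-refuter} and the mutual exclusivity of Remark~\ref{obs:classifier-totality}, the provability-upgrade of Proposition~\ref{assm:upgrade}, and the promise-totality of Proposition~\ref{assm:promise-totality} — and then to stop precisely where a reflection instance would be required. All reasoning is carried out inside $\HA$.

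First I would fix $d=\mathrm{diag}(t_\theta)$ and record from Lemma~\ref{lem:fixedpoint} that $\vdash_\HA D\leftrightarrow\bigl(\Cl_A(\overline d)\to B(\overline d)\bigr)\wedge\bigl(\Cl_B(\overline d)\to A(\overline d)\bigr)$. Since $t_\theta$ genuinely codes a formula in one free variable, $d$ genuinely codes a sentence, so $\vdash_\HA\Live(\overline d)$ under the convention $\Live\equiv\Wff$ — and for a general promised domain this is part of the promise (Remark~\ref{rem:live-domain}) — which makes $\overline d$ a legitimate input to which Propositions~\ref{assm:upgrade} and~\ref{assm:promise-totality} apply. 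Instantiating Proposition~\ref{assm:upgrade} at $e=\overline d$ and discharging $\Live(\overline d)$ then gives the first display, the conditional provability obligations, with no further work.

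For the second display I would argue inside $\HA$ under the hypothesis $\Live(\overline d)$. Proposition~\ref{assm:promise-totality} yields $\Cl_A(\overline d)\vee\Cl_B(\overline d)$, and Remark~\ref{obs:classifier-totality} makes the two disjuncts mutually exclusive, so I may branch on them. In the branch $\Cl_A(\overline d)$ we obtain $\neg\Cl_B(\overline d)$, so the second conjunct of $\theta(\overline d)$ is vacuous and the fixed-point equivalence collapses to $\vdash_\HA D\leftrightarrow\bigl(\Cl_A(\overline d)\to B(\overline d)\bigr)$; the content of $D$ is then exactly the implication that fires once the classifier has committed to $A$, so $\Cl_A(\overline d)$ forces $B(\overline d)$, while Proposition~\ref{assm:upgrade} simultaneously delivers $\Prov_\HA(\ulcorner A(\overline d)\urcorner)$, giving $\Cl_A(\overline d)\to\bigl(B(\overline d)\wedge\Prov_\HA(\ulcorner A(\overline d)\urcorner)\bigr)$. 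The branch $\Cl_B(\overline d)$ is symmetric, interchanging $A$ and $B$, and reintroducing $\Live(\overline d)$ on each implication yields the second display. For the closing clause I would then observe that a principle which both forces commitment at $d$ — already available on $\Live$ from Proposition~\ref{assm:promise-totality} — and supplies the local reflection instance $\Prov_\HA(\ulcorner A(\overline d)\urcorner)\to A(\overline d)$ (resp.\ for $B$) gives, in the $\Cl_A(\overline d)$ branch, both $B(\overline d)$ and $A(\overline d)$, contradicting the truth of $\Sep(A,B)$ that follows from $\Solv(\Sep(A,B))$; the other branch is symmetric.

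I expect the delicate point to be the flip in the second display — executing it without quietly importing anything beyond the fixed-point equivalence. The genuine hinge is the status of the diagonal sentence $D$ itself: the argument must expose that $\HA$ delivers only the conditional obligations of the two displays, and that closing from them to an outright contradiction is exactly what requires a single local application of the $\HA$-reflection scheme at the fixed point, which is not predicatively available. So the main labour is bookkeeping — checking that each appeal to mutual exclusivity, totality, and provability-upgrade is legitimate at $\overline d$, and that nowhere does the derivation silently invoke $\Sigma^0_1$-soundness of $\HA$, reflection, or a uniform proof-extraction stronger than Proposition~\ref{assm:upgrade}.
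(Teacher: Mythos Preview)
Your overall line is the paper's: invoke the fixed point $\vdash_\HA D\leftrightarrow\theta(\overline d)$, read off the two implication conjuncts, and conjoin them with the provability-upgrade of Proposition~\ref{assm:upgrade}. The paper's sketch is four sentences and does exactly this; it does not invoke promise-totality (Proposition~\ref{assm:promise-totality}) or mutual exclusivity (Remark~\ref{obs:classifier-totality}) at all in deriving the two displays. Your case-split on $\Cl_A(\overline d)\vee\Cl_B(\overline d)$ is therefore additional scaffolding not present in the paper's argument.

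There is, however, a genuine gap at precisely the step you flag as delicate. In the $\Cl_A(\overline d)$ branch you correctly reduce the fixed-point equivalence to $D\leftrightarrow(\Cl_A(\overline d)\to B(\overline d))$, but you then write ``the content of $D$ is then exactly the implication that fires once the classifier has committed to $A$, so $\Cl_A(\overline d)$ forces $B(\overline d)$''. That inference is only valid if $D$ itself is available inside $\HA$, and nothing you have done establishes $\vdash_\HA D$. Knowing that $D$ is \emph{equivalent} to a certain implication is not the same as knowing the implication holds; your case analysis simplifies the \emph{shape} of $\theta(\overline d)$ but never discharges the antecedent $D$. The paper's sketch is equally laconic at this point (``unfolding the conjunction yields the stated implications''), so the lacuna is shared rather than introduced by you---but your more explicit argument exposes it instead of closing it. To make the second display go through as written you would need either an independent argument that $\vdash_\HA D$, or to read the display with $D$ as an additional hypothesis; the latter reading is what the paper's sketch most naturally supports, and it is also what the downstream Corollary~\ref{cor:barrier-truth-fixed} needs, since the reflection trigger $\Prov_\HA(\ulcorner D\urcorner)\to D$ is precisely the device that supplies $D$.
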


\begin{proof}[Proof sketch]
	By Lemma~\ref{lem:fixedpoint} we have
	\begin{equation}
		\vdash_\HA D \leftrightarrow
		\bigl(\Cl_A(\overline d)\rightarrow B(\overline d)\bigr)
		\wedge
		\bigl(\Cl_B(\overline d)\rightarrow A(\overline d)\bigr).
	\end{equation}
	Unfolding the conjunction yields the stated implications.
	Combining these with Proposition~\ref{assm:upgrade} gives the provability obligations.
	No reflection or truth-level reasoning is used.
\end{proof}

\begin{proposition}[Diagonal trigger (local reflection for $D$)]\label{assm:trigger}
	\begin{equation}
	\vdash_\HA \Prov_{\HA}(\ulcorner D\urcorner)\rightarrow D.
	\end{equation}
\end{proposition}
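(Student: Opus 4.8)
The statement to establish, Proposition~\ref{assm:trigger}, is the local reflection instance
$\vdash_\HA \Prov_{\HA}(\ulcorner D\urcorner)\rightarrow D$
for the particular diagonal sentence $D$ built in Lemma~\ref{lem:fixedpoint}. Note first that this is \emph{not} an unconditional theorem of $\HA$: by L\"ob's theorem, $\vdash_\HA \Prov_{\HA}(\ulcorner D\urcorner)\to D$ would force $\vdash_\HA D$, and there is no reason the flip-formula fixed point should be $\HA$-provable outright. Hence the honest reading, consistent with the paper's stated aim of \enquote{auditability} and with the surrounding text (\enquote{the formal contradiction requires an explicit local instance of reflection at the diagonal fixed point}), is that Proposition~\ref{assm:trigger} is a \emph{posited} principle --- the named reflection trigger --- exactly parallel to Propositions~\ref{assm:upgrade} and~\ref{assm:promise-totality}, which are likewise introduced as assumptions rather than proved. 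The plan, therefore, is to \emph{justify the assumption}: to explain why local reflection at $D$ is the minimal additional ingredient, to locate it precisely in the hierarchy of reflection principles, and to show that adjoining it to the hypotheses of Theorem~\ref{thm:barrier-cond} closes the argument.

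\textbf{Key steps.} First I would record the provability-side conclusion already in hand: under $\Solv(\Sep(A,B))$, Proposition~\ref{assm:upgrade}, and Proposition~\ref{assm:promise-totality}, Theorem~\ref{thm:barrier-cond} gives, inside $\HA$,
\[
\Live(\overline d)\rightarrow\Bigl(\Cl_A(\overline d)\rightarrow\bigl(B(\overline d)\wedge \Prov_{\HA}(\ulcorner A(\overline d)\urcorner)\bigr)\Bigr)
\]
and the symmetric clause with $A$ and $B$ exchanged. Second, I would feed in Proposition~\ref{assm:trigger}: since $D\leftrightarrow\theta(\overline d)$ and $\theta(\overline d)=(\Cl_A(\overline d)\to B(\overline d))\wedge(\Cl_B(\overline d)\to A(\overline d))$, the reflection instance $\Prov_{\HA}(\ulcorner D\urcorner)\to D$ says precisely that an $\HA$-proof of the flip formula makes the flip formula true. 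Third, I would combine these with Proposition~\ref{assm:promise-totality} on the live index $d$ (legitimate by Remark~\ref{rem:live-domain}): assuming $\Live(\overline d)$, the classifier commits, say $\Cl_A(\overline d)$; then $B(\overline d)$ and $\Prov_{\HA}(\ulcorner A(\overline d)\urcorner)$ both hold, whence $\Prov_{\HA}$ of each conjunct of $\theta(\overline d)$ (the first conjunct is $\Cl_A(\overline d)\to B(\overline d)$, provable once $B(\overline d)$ is; the second is vacuous given mutual exclusivity from Remark~\ref{obs:classifier-totality}), so $\Prov_{\HA}(\ulcorner D\urcorner)$, so by the trigger $D$, so $\theta(\overline d)$, so from $\Cl_A(\overline d)$ we get $B(\overline d)$ --- consistent so far --- but feeding $B(\overline d)$ back through Proposition~\ref{assm:upgrade} gives $\Prov_{\HA}(\ulcorner B(\overline d)\urcorner)$, and now both $\Prov_{\HA}(\ulcorner A(\overline d)\urcorner)$ and $\Prov_{\HA}(\ulcorner B(\overline d)\urcorner)$ hold while $A(\overline d)$ and $B(\overline d)$ must be jointly refutable by the uniform refuter of Lemma~\ref{lem:uniform-refuter}; chasing the second disjunct $\Cl_B(\overline d)$ symmetrically yields the same collision. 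Fourth, I would extract the corollary: the conjunction of $\Solv(\Sep(A,B))$ with Propositions~\ref{assm:upgrade}, \ref{assm:promise-totality}, and~\ref{assm:trigger} is inconsistent over $\HA$, so no putative uniform separator can simultaneously satisfy the provability-upgrade and admit the reflection trigger --- which is the barrier.

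\textbf{Justifying the trigger itself.} The remaining task is to argue that Proposition~\ref{assm:trigger} is a \emph{natural} rather than \emph{ad hoc} hypothesis. Here I would point out that $\HA$ proves every instance of uniform reflection for $\Sigma^0_1$-sentences, and more: since $D\leftrightarrow\theta(\overline d)$ with $\theta$ built from $\Cl_A,\Cl_B$ (each $\Sigma^0_1$ by Definition~\ref{def:classifier} and clause~(a) of the framework) and the atomic $A,B$, the sentence $D$ lies low in the arithmetical hierarchy --- it is (equivalent to) a Boolean combination of $\Sigma^0_1$-formulas --- so $\Prov_{\HA}(\ulcorner D\urcorner)\to D$ is an instance of \textsc{local reflection} at a level whose \emph{uniform} version over $\HA$ is exactly the $\Sigma^0_1$-reflection / consistency-strength boundary that predicativity cannot cross; instancewise it is innocuous, uniformly it is not, which is the same asymmetry the paper exploits throughout. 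I would then flag that one cannot discharge the hypothesis inside $\HA$ (L\"ob again), so it genuinely must be posited --- and that this is the paper's point: the barrier is not that separation is impossible, but that any uniform, internally-certified separator silently imports this reflection step. \textbf{The main obstacle} is bookkeeping rather than conceptual: in Step three the interleaving of the two upgrade clauses with the trigger must be done so that the contradiction is derived \emph{inside} $\HA$ under the single hypothesis $\Live(\overline d)$, without an illicit appeal to external $\Sigma^0_1$-soundness --- one must verify that every step ($\Prov_{\HA}$-introduction from proved conjuncts, mutual exclusivity, the refuter's action on $\bot$) is formalizable, and in particular that \enquote{$\Prov_{\HA}(\ulcorner A(\overline d)\urcorner)\wedge\Prov_{\HA}(\ulcorner B(\overline d)\urcorner)\to\Prov_{\HA}(\ulcorner\bot\urcorner)$} follows from Lemma~\ref{lem:uniform-refuter} together with the derivability conditions, so that the final collision is an honest $\HA$-refutation of the assumption package and not a meta-level observation.
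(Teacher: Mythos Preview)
Your core reading is exactly right and matches the paper: Proposition~\ref{assm:trigger} is \emph{not} proved in the paper at all. Like Propositions~\ref{assm:upgrade} and~\ref{assm:promise-totality}, it is a posited principle (the label \texttt{assm:trigger} already signals this), and your L\"ob-theorem observation correctly explains why it \emph{cannot} be discharged inside $\HA$. On that point there is nothing to compare --- the paper simply states the assumption and moves on; you supply justification the paper omits.

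Where your proposal overreaches is in the ``Key steps'' sketch of how the trigger closes the contradiction. Two concrete problems. First, the step ``feeding $B(\overline d)$ back through Proposition~\ref{assm:upgrade} gives $\Prov_{\HA}(\ulcorner B(\overline d)\urcorner)$'' misreads the upgrade hypothesis: Proposition~\ref{assm:upgrade} has antecedent $\Cl_B(e)$, not $B(e)$, and under your branch $\Cl_A(\overline d)$ holds while $\Cl_B(\overline d)$ is excluded by Remark~\ref{obs:classifier-totality}. (You could repair this via provable $\Sigma^0_1$-completeness, since $B$ is atomic, but that is a different justification.) Second, and more seriously, your derivation terminates at $\Prov_{\HA}(\ulcorner\bot\urcorner)$, which is \emph{not} a contradiction inside $\HA$ --- that would require $\mathrm{Con}(\HA)$. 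This is precisely why the paper's Corollary~\ref{cor:barrier-truth-fixed} explicitly adds the further hypothesis of local reflection for $A$ and $B$ at $d$,
\[
\bigl(\Prov_{\HA}(\ulcorner A(\overline d)\urcorner)\rightarrow A(\overline d)\bigr)\ \wedge\ \bigl(\Prov_{\HA}(\ulcorner B(\overline d)\urcorner)\rightarrow B(\overline d)\bigr),
\]
which you have silently dropped. With that hypothesis restored, $\Prov_{\HA}(\ulcorner A(\overline d)\urcorner)$ yields $A(\overline d)$, and together with $B(\overline d)$ the refuter of Lemma~\ref{lem:uniform-refuter} gives $\bot$ directly --- no detour through $\Prov_{\HA}(\ulcorner\bot\urcorner)$ is needed. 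Your own ``main obstacle'' paragraph half-recognizes this bookkeeping hazard but does not actually close it; the fix is simply to carry the $A,B$-reflection hypothesis from Corollary~\ref{cor:barrier-truth-fixed} through your Step~3.
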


\begin{corollary}[Truth-level contradiction requires reflection]\label{cor:barrier-truth-fixed}
	Assume the hypotheses of Theorem~\ref{thm:barrier-cond} and Proposition~\ref{assm:trigger}. Assume also local reflection for $A,B$ at $d$:
\begin{equation}
(\Prov_{\HA}(\ulcorner A(\overline d)\urcorner)\rightarrow A(\overline d))
\ \wedge\
(\Prov_{\HA}(\ulcorner B(\overline d)\urcorner)\rightarrow B(\overline d)).
\end{equation}
	If additionally $\vdash_\HA \Live(\overline d)$, then $\vdash_\HA \bot$. The barrier is interface-driven. It depends only on uniform representability, provability-upgrade, and diagonal self-reference. No semantic assumptions about $A$ or $B$ enter the argument.
\end{corollary}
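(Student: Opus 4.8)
The plan is to instantiate the whole apparatus at the diagonal index $d$ and force both classifier branches onto a single impossible conjunction. I would start from the ``moreover'' part of Theorem~\ref{thm:barrier-cond} and discharge its promised-domain guard using the hypothesis $\vdash_\HA\Live(\overline d)$, obtaining
\[
\vdash_\HA\Cl_A(\overline d)\rightarrow\bigl(B(\overline d)\wedge\Prov_{\HA}(\ulcorner A(\overline d)\urcorner)\bigr),
\qquad
\vdash_\HA\Cl_B(\overline d)\rightarrow\bigl(A(\overline d)\wedge\Prov_{\HA}(\ulcorner B(\overline d)\urcorner)\bigr).
\]
I then run the two provability conjuncts through the assumed local reflection for $A$ and $B$ at $d$, turning $\Prov_{\HA}(\ulcorner A(\overline d)\urcorner)$ into $A(\overline d)$ and $\Prov_{\HA}(\ulcorner B(\overline d)\urcorner)$ into $B(\overline d)$. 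Both implications then assert exactly $\vdash_\HA\Cl_A(\overline d)\rightarrow(A(\overline d)\wedge B(\overline d))$ and $\vdash_\HA\Cl_B(\overline d)\rightarrow(A(\overline d)\wedge B(\overline d))$.

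Next I close the disjunction. Proposition~\ref{assm:promise-totality} together with $\vdash_\HA\Live(\overline d)$ gives $\vdash_\HA\Cl_A(\overline d)\vee\Cl_B(\overline d)$, so an $\HA$-internal disjunction elimination on the two implications yields $\vdash_\HA A(\overline d)\wedge B(\overline d)$. To collapse this to $\bot$ I turn to the uniform refuter: by Lemma~\ref{lem:uniform-refuter} and Remark~\ref{obs:classifier-totality}, $\HA$ proves $\neg(\Cl_A(\overline d)\wedge\Cl_B(\overline d))$, because a pair of realizers for $A(\overline d)$ and $B(\overline d)$ would be carried by $r(\overline d)$ to a purported realizer of $\bot$. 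Since $A$ and $B$ are atomic, the atomic clause of the realizability framework lets $\HA$ also prove the converse $A(\overline d)\rightarrow\Cl_A(\overline d)$ and $B(\overline d)\rightarrow\Cl_B(\overline d)$ (a true atomic instance is realized by a canonical realizer); combined with $\vdash_\HA A(\overline d)\wedge B(\overline d)$ this forces $\vdash_\HA\Cl_A(\overline d)\wedge\Cl_B(\overline d)$, contradicting disjointness, so $\vdash_\HA\bot$.

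It is worth isolating where the diagonal trigger sits. In this route $D$ is actually an $\HA$-theorem once $A(\overline d)$ and $B(\overline d)$ are in hand (both flip-conjuncts of Lemma~\ref{lem:fixedpoint} then have provable consequents), so Proposition~\ref{assm:trigger} is not strictly consumed; but it is exactly the local reflection instance $\Prov_{\HA}(\ulcorner D\urcorner)\rightarrow D$ one must cite to detach $D$ in the presentation that works from Lemma~\ref{lem:fixedpoint} directly rather than from the ``moreover'' clause, and I would flag it for the ``truth-sound'' audit the paper wants. That audit is the step I expect to be the real obstacle: verifying that no semantic fact about $A$ or $B$ has been used — every move must go only through the uniformly extracted refuter, the $\Sigma^0_1$-tameness of atomic realizability, the local reflection for $A,B$ at $d$, the diagonal trigger, and $\vdash_\HA\Live(\overline d)$ — and checking that the disjunction elimination and the derivation of $\neg(\Cl_A(\overline d)\wedge\Cl_B(\overline d))$ go through intuitionistically, without any appeal to excluded middle on $\Cl_A(\overline d)$.
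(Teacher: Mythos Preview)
The paper states this corollary without proof, so there is no line-by-line argument to compare against; your derivation is essentially the intended completion, and it is correct. From Theorem~\ref{thm:barrier-cond}'s ``moreover'' clause together with $\vdash_\HA\Live(\overline d)$, local reflection for $A,B$ at $d$, and promise totality, you obtain $\vdash_\HA A(\overline d)\wedge B(\overline d)$; pushing this back through atomic realizability to $\Cl_A(\overline d)\wedge\Cl_B(\overline d)$ and invoking the refuter's mutual-exclusion clause (Remark~\ref{obs:classifier-totality}) yields $\bot$. The disjunction elimination and the exclusion argument are intuitionistically unproblematic, as you note.

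Two small points are worth flagging. First, the step $A(\overline d)\to\Cl_A(\overline d)$ (and likewise for $B$) appeals to the standard clause that any number realizes a true atomic formula; the paper only records that atomic realizability is $\Sigma^0_1$, so you are importing a routine but unstated fact about the chosen realizability interpretation. Second, your observation that Proposition~\ref{assm:trigger} is not literally consumed once you take the ``moreover'' clause of Theorem~\ref{thm:barrier-cond} as a black box is accurate, but slightly misleading about where the reflection sits: that clause already delivers $\Cl_A(\overline d)\to B(\overline d)$, which on the fixed-point reading comes from $D$ itself, and the proof sketch of Theorem~\ref{thm:barrier-cond} does not explain how $D$ is discharged. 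So the diagonal trigger is not so much unused as already absorbed into the statement you are quoting; if you rederived the ``moreover'' clause from Lemma~\ref{lem:fixedpoint} directly, Proposition~\ref{assm:trigger} (or an equivalent way of securing $D$) is exactly what you would need. Your closing paragraph gestures at this, and it is the right diagnosis.
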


\begin{exposition}
	We record, without developing the full formal apparatus, a natural strengthening of the barrier theme.
\end{exposition}

\begin{corollary}\label{exp:prospective-strengthening}
	Assume the ambient setting provides a uniform arithmetization of problems, so that it makes sense to range over a domain $\mathsf{Prob}$ of codes $e$ and to speak about predicates such as
	\begin{equation}
		\text{\enquote{$e$ is solvable}}
	\end{equation}
	or
	\begin{equation}
	\text{\enquote{$e$ admits a polynomially checkable certificate}.}
	\end{equation}
	In such a setting one may pose a question that is \emph{impredicatively} \textsc{Turing-Reducible}, cf. \citet{turing37,turing38}:
	\begin{equation}
	\exists M\,\exists p\;\forall n\;\Bigl[\mathrm{Check}_n(e,M,p)\Bigr],
	\end{equation}
	When applied to a canonical code $e_{\mathrm{\PT \lor \NPT}}$ for the problem \enquote{$\mathcal P$ vs.\ $\mathcal{NP}$}, this becomes a self-referentially exposed classification task: the target of classification is itself specified within the same problem-language that supports the classifier.
\end{corollary}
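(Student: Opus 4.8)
The plan is to re-run the extraction and diagonalization of Sections~\ref{sec:uniform}--\ref{sec:barrier} at a code that names the decision problem \enquote{$\PT$ vs.\ $\NPT$} itself, so that the diagonal index produced by Lemma~\ref{lem:fixedpoint} lands inside the very domain $\mathsf{Prob}$ over which the classifier-interface operates. Concretely one wants a code $e_{\mathrm{\PT \lor \NPT}}$ whose own classification under $\Cl$ --- \enquote{is it solvable?}, \enquote{does it admit a polynomially checkable certificate?} --- is governed by the flip formula $\theta$, so that Theorem~\ref{thm:barrier-cond} and Corollary~\ref{cor:barrier-truth-fixed} apply verbatim.

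First, I would fix the arithmetization of $\mathsf{Prob}$: a primitive recursive coding of problem descriptions together with the two arithmetical predicates of interest, each rendered in the language of $\HA$ through the realizability predicate $\Real$ --- its $\Sigma^0_1$ shape for solvability and a $\Sigma^0_2$ shape for \enquote{admits a polynomially checkable certificate}. Within this coding one isolates the canonical code $e_{\mathrm{\PT \lor \NPT}}$ whose associated sentence is the impredicatively Turing-reducible statement $\exists M\,\exists p\,\forall n\,[\mathrm{Check}_n(e,M,p)]$ specialized to the $\PT$-vs-$\NPT$ instance; by Lemma~\ref{lem:Substitution} its Gödel code is a primitive recursive function of the ambient parameters, hence a legitimate argument for $\Cl$.

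Second --- and this is the crux --- I would observe that $\Cl$ of Definition~\ref{def:classifier} is arithmetically definable, hence, restricted to the sub-language of $\mathsf{Prob}$-codes, is itself faithfully describable as a problem in $\mathsf{Prob}$. This allows the construction of $e_{\mathrm{\PT \lor \NPT}}$ to depend intensionally on the code of $\Cl$, which is precisely the self-referential exposure asserted in the statement: the target of classification lives in the same problem-language as the classifier. Feeding this dependence through $\theta(x)\equiv(\Cl_A(x)\rightarrow B(x))\wedge(\Cl_B(x)\rightarrow A(x))$ and applying Lemma~\ref{lem:fixedpoint} yields a fixed point $d$ --- a $\theta$-twist of $e_{\mathrm{\PT \lor \NPT}}$ --- with $\vdash_\HA D\leftrightarrow\theta(\overline d)$. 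One then checks $\vdash_\HA\Live(\overline d)$ via Remark~\ref{rem:live-domain}, and Theorem~\ref{thm:barrier-cond} delivers the conditional provability obligations at $d$; adjoining Propositions~\ref{assm:upgrade}, \ref{assm:promise-totality}, \ref{assm:trigger} and local reflection for $A,B$ at $d$ then gives $\vdash_\HA\bot$, i.e.\ no uniform internally certified classifier can coherently classify its own $\PT$-vs-$\NPT$ code.

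I expect the second step to be the main obstacle. One must choose the arithmetization of $\mathsf{Prob}$ so that the classifier's behavior on $\mathsf{Prob}$-codes is \emph{faithfully internalized} as a $\mathsf{Prob}$-code --- identifying the two problem-languages, not merely relating them --- and so that the $\theta$-twisted canonical code still satisfies $\Live$ and still forces $\Cl$ to commit. Absent such a faithful internalization the diagonal index exists but may fall outside the promised domain on which the upgrade and totality hypotheses bite, so the incompatibility need not be triggered; this is exactly why the statement is recorded as a prospective strengthening rather than proved in full here.
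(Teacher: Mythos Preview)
The paper supplies \emph{no proof} for this corollary. It is introduced by the line ``We record, without developing the full formal apparatus, a natural strengthening of the barrier theme,'' and the corollary itself is an observational statement: given a uniform arithmetization of $\mathsf{Prob}$, one \emph{may pose} the displayed $\exists M\,\exists p\,\forall n$ question, and at the canonical code $e_{\mathrm{\PT\lor\NPT}}$ this question is self-referentially exposed because the classification target is formulated in the same problem-language that hosts the classifier. That is the entire content; the subsequent \textsl{Observation} paragraph then speculates informally about iterating the mechanism.

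Your sketch is therefore not a competing proof of the same claim but an attempt to \emph{cash out} the prospective strengthening that the paper deliberately leaves undeveloped. Relative to that goal, your plan is the natural one and is faithful to the paper's architecture: specialize $A,B$ to the solvability/certificate predicates on $\mathsf{Prob}$, check that $\Cl$ restricted to $\mathsf{Prob}$-codes is itself a $\mathsf{Prob}$-object, and re-run Lemma~\ref{lem:fixedpoint} and Theorem~\ref{thm:barrier-cond} at the resulting diagonal index. You also correctly isolate the genuine gap --- the faithful-internalization requirement ensuring that the $\theta$-twisted code stays inside $\Live$ and forces $\Cl$ to commit --- and you are right that this is exactly why the paper stops at ``recording'' the corollary rather than proving a contradiction. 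One minor overreach: the corollary as written does not assert $\vdash_\HA\bot$; it only asserts that the classification task becomes self-referentially exposed. Your final paragraph conflates the corollary's modest claim with the stronger conclusion one would \emph{like} to draw from it.
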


\begin{observation}
	Accordingly, the final barrier can be read as a \emph{meta}--separation phenomenon: once one is permitted to quantify over $\mathsf{Prob}$ without stratification, the distinction between \enquote{solving} and \enquote{certifying a solution} can be made to fold back onto itself. In particular, one can define meta-questions whose instances include their own classification interface as admissible input, so that any attempted resolution protocol becomes susceptible to the same adversarial recipe. The point is not that \enquote{$\mathcal P$ vs.\ $\mathcal{NP}$ is undecidable} constitutes a resolution, but that the \emph{validity} of any proposed resolution schema can be arranged to depend on the very self-application that diagonalization exploits.

	In such a regime, the predicate \enquote{this is a valid polynomially checkable resolution} is no longer transparently simpler than the underlying resolution task: its verification inherits the same intensional dependency on the code of the interface. Thus the mechanism can be iterated: one may quantify over uniform separation tasks themselves and ask for uniform meta-classifications of those tasks, and the same adversarial loop reappears at the next level. The common source is unbounded, untyped quantification over \enquote{problems} together with internal certification demands, which jointly supply the self-availability needed for diagonal inversion.
\end{observation}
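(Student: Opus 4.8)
The plan is to exhibit the envisaged meta-question as a disguised instance of the diagonal construction already carried out, so that its ``self-referential exposure'' becomes literally the fixed point of Lemma~\ref{lem:fixedpoint} transported into the problem-language. First I would fix the arithmetization of $\mathsf{Prob}$: let $e$ range over codes of decision problems with recursive instance-relation, let $\mathrm{Inst}(e,n,b)$ be the primitive recursive predicate ``$b$ is the correct bit on instance $n$ of problem $e$'', and let $\mathrm{Check}_n(e,M,p)$ abbreviate ``$M$, run on input $n$ under the polynomial clock $p$, halts with an output bit $b$ satisfying $\mathrm{Inst}(e,n,b)$''. Then ``$e$ is solvable in polynomial time'' is rendered by the $\Sigma^0_2$ formula $\exists M\,\exists p\,\forall n\,\mathrm{Check}_n(e,M,p)$, and ``$e$ admits a polynomially checkable certificate'' by the analogous formula expressing the existence of a polynomial-time verifier together with short certificates; both have primitive recursive matrices and are therefore arithmetically representable in $\HA$. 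This already discharges the first half of the statement: the question can be posed, it is Turing-reducible to $\emptyset''$, and its ``impredicativity'' is exactly that the leading $\exists M$ ranges without stratification over \emph{all} machines, including those that themselves query the solvability predicate.

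Next I would cast ``$\PT$ vs.\ $\NPT$'' as a uniform separation task in the sense of \eqref{eq:intro-sep}. Passing to a definitional extension of $\HA$, let $A(e)$ and $B(e)$ be atomic predicates with intended $\Sigma^0_1$ content ``$e$ has a witness-bounded polynomial decider'' and ``$e$ is an $\NPT$-hard instance certifying non-collapse'', so that $\Sep(A,B)$ is a faithful rendering of $\PT\neq\NPT$ and the classifier $\Cl$ of Definition~\ref{def:classifier} is precisely the promised classification interface for $\mathsf{Prob}$. The core step is to manufacture the canonical code $e_{\PT\lor\NPT}$: applying the \textsc{Diagonal Lemma} together with the primitive recursive solvability-coding and Lemma~\ref{lem:Substitution} to the flip formula $\theta$ of Definition~\ref{def:flip}, I would obtain a code $e^{\ast}$ whose associated solvability sentence is provably equivalent in $\HA$ to $\theta(\overline{e^{\ast}})$ --- that is, to the fixed point $D$ of Lemma~\ref{lem:fixedpoint} at $d=e^{\ast}$ --- with the decision problem named by $e^{\ast}$ arranged so that its ``self'' instance asks precisely about $\Cl$'s commitment at $e^{\ast}$. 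With this identification $e^{\ast}$ occupies the role of $d$, so Theorem~\ref{thm:barrier-cond}, Propositions~\ref{assm:upgrade}, \ref{assm:promise-totality} and~\ref{assm:trigger}, and local reflection at $e^{\ast}$ apply verbatim, and Corollary~\ref{cor:barrier-truth-fixed} shows that any resolution schema which forces $\Cl$ to commit at $e^{\ast}$ and upgrades provability to truth derives $\bot$. This is the asserted self-referentially exposed classification task: the object to be classified is specified inside the very problem-language over which the classifier quantifies, and the diagonal inversion of Section~\ref{sec:barrier} applies to it unchanged.

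The step I expect to be the main obstacle is legitimizing the double role of $e_{\PT\lor\NPT}$. The code must at once be recognizable as a faithful name for ``$\PT$ vs.\ $\NPT$'' and depend intensionally on the classifier-interface so that the flip identity holds; these demands are compatible only when the problem-language already contains an untyped name for its own classification predicate --- precisely the unstratified regime flagged in the closing \emph{observation} as the true source of the phenomenon. Making this rigorous requires a carefully layered arithmetization together with a check that the Hilbert--Bernays--L\"ob derivability conditions (hence the \textsc{Diagonal Lemma} and Lemma~\ref{lem:uniform-refuter}) survive the extra reflexive layer. A secondary obstacle is the passage between the classical formulation of $\PT$ vs.\ $\NPT$ and the realizability setting of Sections~\ref{sec:framework}--\ref{sec:barrier}: one must either restrict to the $\Sigma^0_1$ witness-extraction content of the separation or route through a negative translation, checking in each case that Lemma~\ref{lem:uniform-refuter} still yields the uniform refuter and that Proposition~\ref{assm:upgrade} still supplies its soundness obligations. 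Once these points are settled, the remainder is bookkeeping: unfolding the realizability clauses and reusing the diagonal contradiction of Corollary~\ref{cor:barrier-truth-fixed} with $d:=e_{\PT\lor\NPT}$.
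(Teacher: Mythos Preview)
The paper offers no proof of this passage: it is an unnumbered \emph{Observation}, i.e.\ interpretive commentary following Corollary~\ref{exp:prospective-strengthening}, not a formal theorem. Your proposal therefore does not compete with any argument in the paper but attempts to supply rigor the paper explicitly declines to develop (``without developing the full formal apparatus''). Read charitably, your outline is a plausible sketch of how one \emph{might} try to formalize the remark, and in that sense it goes well beyond what the paper does.

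That said, two of the difficulties you flag as ``obstacles'' are structural gaps rather than bookkeeping. First, the predicates ``$e$ has a polynomial-time decider'' and ``$e$ is $\NPT$-hard'' are $\Sigma^0_2$, as you yourself compute; yet the machinery of Sections~\ref{sec:framework}--\ref{sec:barrier}---the instancewise agreement of Remark~\ref{rem:InstancewiseAgreement}, the atomic realizability clause, and the soundness transfer feeding Proposition~\ref{assm:upgrade}---is calibrated to atomic or $\Sigma^0_1$ predicates. Declaring $A,B$ ``atomic with intended $\Sigma^0_1$ content'' does not repair this: either the atoms fail to express the intended complexity classes, or Lemma~\ref{lem:uniform-refuter} and Proposition~\ref{assm:upgrade} no longer apply as written. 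A negative translation does not obviously help, since it destroys the $\Sigma^0_1$ shape on which the extraction depends. Second, the ``double role'' of $e_{\PT\lor\NPT}$ is not merely delicate but overdetermined: the Diagonal Lemma fixes $d$ (up to provable equivalence) as the code satisfying $D\leftrightarrow\theta(\overline d)$, and there is no residual freedom to arrange that this same $d$ also be a canonical, externally recognizable code for the $\PT$ vs.\ $\NPT$ question. The paper's observation sidesteps both issues by remaining informal; turning your sketch into a proof would require genuinely new ingredients, not just a more careful arithmetization.
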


\section{Interpretation}\label{sec:interpretation}

\begin{exposition}
	The preceding sections establish a barrier that is structural rather than combinatorial.  No appeal is made to the semantic content of the predicates $A(e)$ and $B(e)$, nor to the difficulty of particular instances. The obstruction arises from the attempt to demand a \emph{uniform}, arithmetically representable mechanism whose classification commitments are required to be internally certified inside predicative arithmetic. Once such a mechanism is assumed, its uniformity and representability suffice to make it an object of its own reasoning.  The diagonal construction does not introduce new semantic difficulty; it merely exploits this self-availability. The contradiction is therefore not imported from outside the system, but generated internally by the uniform certification demand itself.

	Once a problem domain admits a primitive recursively describable interface, e.g. the canonical enumeration of legal moves together with a verification predicate shown in \ref{fig:rubik}, any uniform, internally certified classification mechanism ranges over a space rich enough to admit diagonal self-reference. In this respect, complexity-theoretic arithmetizations do not differ in kind from classical recursively enumerable settings. The vulnerability lies not in hardness, but in the insistence on a single, uniform interface whose correctness obligations must themselves be discharged within the same predicative system that represents the interface.

	The \textsc{Adversarial Barrier} does not depend on the absence of solutions, nor on any limitation of an ambient meta-theory. Rather, it shows that any attempt to \emph{uniformly} resolve a separation task inside $\HA$ by an arithmetically representable, internally certified mechanism necessarily reconstructs the very structure required for diagonal inversion.

	In this sense, the obstruction is not merely to a choice of presentation, Uniformity is the generative source of the contradiction, not an auxiliary assumption.
	\end{exposition}

\begin{analogy}\label{analogy}
	Consider an agent whose reasoning is formalized inside $\HA$ and who attempts to assign each arithmetically coded task to one of two disjoint atomic classes. The agent operates by a \emph{uniform} method—primitive recursively representable and therefore internally accessible to $\HA$—and requires that each classification commitment be internally certifiable.

	Because the agent’s method is representable, an adversary need not inspect the semantics of the tasks at all. Representability alone suffices to arithmetically extract a classifier-interface $\Cl(e)$ describing, for every index $e$, the agent’s predicted commitment.

	Using the \textsc{Diagonal Lemma}, the adversary then defines a task whose content is determined \emph{intensionally} by this interface. The resulting instance $d=\Diag(\Cl)$ satisfies the inversion properties
	\begin{equation}
		\Cl(d)=A \;\Rightarrow\; B(d),\quad
		\Cl(d)=B \;\Rightarrow\; A(d).
	\end{equation}
	The agent’s own soundness requirements—demanding that each commitment be internally certified—therefore generate incompatible obligations when applied to the diagonal instance.

	The force of the construction is that no semantic insight is required. Uniform representability alone suffices to reconstruct an adversarial instance that defeats any predicatively acceptable uniform classifier-interface.
\end{analogy}

\begin{figure}[H]
	\centering
	\includegraphics[width=0.75\textwidth]{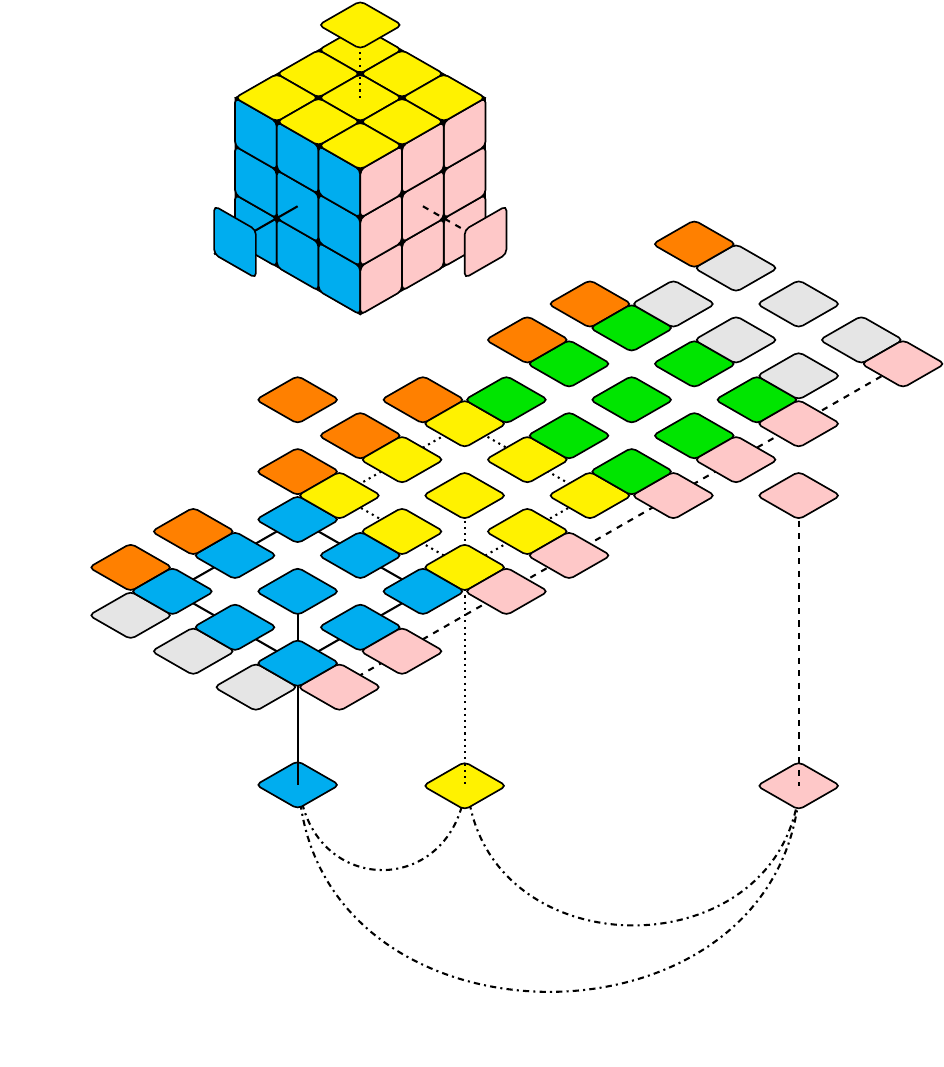}
	\vspace{-3ex}
	\caption{
	The trivial $3\times3\times3$ case from the $\mathcal{NP}$-complete problem of optimal N-sided \textsc{Rubik's Cube} operations can be predicatively expressed. The constrained permutations admit a \enquote{reverse} \textsc{Halting}-style predicate: the solver must \enquote{speak} the problem’s syntactic language in order to enumerate solutions. That language is logic, but varying $N$ changes the natural encoding (generators, state representation, and verification predicates), so \enquote{uniform} reasoning must be understood relative to a fixed arithmetization of the \enquote{whole family}. Latter is not a well-formed object in the strict syntactic sense.
	}
	\label{fig:rubik}
\end{figure}

\begin{conclusion}
	Typed or stratified settings evade diagonal inversion by preventing the  classifier-interface from ranging over its own code. Equivalently, the diagonal instance is ill-typed. The present paper does not adopt this escape, instead, it isolates the reflection hinge that any \emph{untyped}, predicative uniformity demand must confront.

	The decisive impredicativity, cf. \citet{girard89,russell27}, arises from a fundamental structural limitation of $\HA$ and above, \citep{goedel33}, and was anticipated historically by predicativism, cf. \citet{feferman07,russell27}. The adversary’s behavior is a witness to incompleteness, while our classifier's soundness conditions ensure that $\HA$ must internalize contradictory obligations regarding $A(d)$ and $B(d)$, yielding an inconsistency derived only from the \emph{ad hoc} assumption of \textsc{Uniform Problem Separation}. When instantiated with the atomic predicates coding polynomial-time decidability and verifiability, the general result yields an immediate corollary: the uniform version of class separation cannot be established in $\HA$, nor in any arithmetical theory extending it ($\mathsf{PA,ZFC}$) under standard soundness assumptions. Unlike previous barriers---\textsc{Relativization} \citep{bks75}, \textsc{Natural Proofs} \citep{rr97}, or \textsc{Algebrization} \citep{aa09}---which constrain specific techniques, the present mechanism obstructs the very logic of \textsc{Uniform Separation} itself.
\end{conclusion}

\section*{Acknowledgments}
	{\scriptsize
		\bibliographystyle{plainnat}
		\setlength{\bibsep}{0.5pt}
		\bibliography{refs}}


	\begin{center}
		\vspace*{\fill}
	\label{subsec:prev}
\subsection*{Version 3}
	This manuscript constitutes the development of the ideas introduced in \href{https://arxiv.org/abs/2511.14665}{\scriptsize arXiv:2511.14665}. The third version incorporates major syntactic refinements in rigor.

	\subsection*{Final Remarks}
	The author welcomes scholarly correspondence and constructive dialogue. No conflicts of interest are declared.
	This research received no external funding.

		\vspace{2em}

		\begin{center}\scriptsize
			Milan Rosko is from University of Hagen, Germany\\
			\vspace{0.5em}
			Email: \href{mailto:Q1012878@studium.fernuni-hagen.de}{\scriptsize\textsf{Q1012878 $ @ $ studium.fernuni-hagen.de}}, or \href{mailto:hi@milanrosko.com}{\scriptsize\textsf{hi $ @ $ milanrosko.com}}\\
			\vspace{0.5em}
			Licensed under \enquote{Deed} \ccby\, \href{http://creativecommons.org/licenses/by/4.0/}{\scriptsize\textsf{creativecommons.org/licenses/by/4.0}}
		\end{center}

	\vspace*{\fill}
\end{center}

\clearpage

\end{document}